\documentclass[11pt,oneside]{article}
\usepackage[utf8]{inputenc}

\usepackage[english]{babel}
\usepackage[a4paper,scale=0.73]{geometry}
\usepackage{indentfirst}

\usepackage{amsmath,amssymb,amsfonts,amsthm,mathrsfs,stmaryrd}
\usepackage{graphicx,caption,float}
\usepackage[usenames, dvipsnames]{xcolor}
\usepackage{yhmath}
\usepackage[linesnumbered, ruled, vlined]{algorithm2e}
\usepackage{esvect}
\usepackage{longtable}

\usepackage[hang,flushmargin]{footmisc} 

\usepackage{enumitem}
\usepackage{multirow}

\usepackage{caption}
\captionsetup[figure]{font=small, labelfont=bf}

\usepackage{yhmath}

\usepackage[colorlinks=true, pdfstartview=FitV,linkcolor=ForestGreen,citecolor=ForestGreen, urlcolor=blue]{hyperref}

\author{
Ramona Anton$^{\small 1}$,
Nicolae Mihalache$^{\small 2}$
and
Fran\c{c}ois Vigneron$^{\small 3}$
}

\newcommand{\authorramona}{{\small
\textbf{Ramona Anton.}\\
Sorbonne Universit\'{e}, IMJ-PRG, CNRS UMR 7586, F-75252 Paris.
\phantom{x}\hfill\texttt{ramona.anton@imj-prg.fr}
}}

\newcommand{\authornicu}{{\small
\textbf{Nicolae Mihalache.}\\
Universit\'{e} Paris-Est Creteil, CNRS UMR 8050, LAMA, F-94010 Creteil, France and
Universit\'{e} Gustave Eiffel, LAMA, F-77447 Marne-la-Vallée.
\phantom{x}\hfill\texttt{nicolae.mihalache@u-pec.fr}
}}

\newcommand{\authorfv}{{\small
\textbf{François Vigneron.}\\
Universit\'{e} de Reims Champagne-Ardenne,
Laboratoire de Mathématiques de Reims,
UMR 9008 CNRS, Moulin de la Housse, BP 1039,
F-51687 Reims.
\phantom{x}\hfill\texttt{francois.vigneron@univ-reims.fr}
}}


\newtheorem{thm}{Theorem}

\newcommand{\N}{\mathbb{N}}
\newcommand{\Z}{\mathbb{Z}}

\newcommand{\C}{\mathbb{C}}


\DeclareMathOperator{\crit}{Crit}




\newcommand{\eg}[1][~]{\textit{e.g.}#1}
\newcommand{\ie}[1][~]{\textit{i.e.}#1}




\newcommand{\zeros}{\mathcal{Z}}
\newcommand{\cv}{\mathcal{V}}
\DeclareMathOperator{\crits}{Crit}


\begin{document}
\title{A short ODE proof of the Fundamental Theorem of Algebra}
\date{}
\maketitle

\begin{abstract}  
We propose a short proof of the Fundamental Theorem of Algebra
based on the ODE that describes the Newton flow and the fact
that the value $|P(z)|$ is a Lyapunov function. It clarifies an idea that goes back to Cauchy.
\end{abstract}

The Fundamental Theorem of Algebra is one of the major results of mathematics;
historical reviews can be found~\eg in~\cite{Sma81}, \cite[Chap.~II]{Dieu86} or \cite{GTM123}.
In practice, the roots of polynomials are computed with a variety of different algorithms (see~\eg\cite{MV2023}
for an exhaustive review) and, notably, with Newton-Raphson's method~\cite{Gal00}.
It is well known that the different basins of attraction of the iterative Newton method are deeply intricate.
The existence of universal sets of starting points such that the closure of all Newton's iterates will always contain
all roots is remarkable though highly non-trivial~(see Hubbard-Schleicher-Sutherland \cite{HSS01}).
On the other hand, the flow of Newton's ODE presented below in~\eqref{eq:newtonFlow} is topologically much simpler
and suitable for a short constructive proof of the fundamental Theorem of Algebra that we present in this note.

\medskip
This idea can be found, at a higher level of generality, in Hirsch-Smale~\cite{HS1979}.
However, the proof below is of great practical importance because, in addition to the general considerations of~\cite{HS1979}, we
are able to control very explicitly the speed of convergence of each Newton trajectory.
For example,~\eqref{eq:Lyapunov} implies that, away from roots and critical points, each step of Newton's method approximately divides
the value of the polynomial by~$e$ (see~Figure~\ref{fig}), which is a property that, to the best of our knowledge,
had not been noticed before. 
In~\cite{MV2023} we use this idea to split a non-trivial polynomial of record degree~$10^{12}$.
We believe it is important to highlight it.

\bigskip

Given a polynomial $P\in\C[z]$ of degree $d\geq1$, we denote by $\zeros(P)=P^{-1}(0)$ the set of its zeros
and by $\crits(P)=\zeros(P')$ the set of its critical points, a finite set (by Euclidean division). Let also $\cv(P)=P(\crits(P))$ the 
set of critical values of $P$. 

\medskip
The \textit{Newton flow} of $P$ is defined by the ODE
\begin{equation}\label{eq:newtonFlow}
\varphi_z(0)=z \quad\text{and}\quad \varphi_z'(t) = -\frac{P(\varphi_z(t))}{P'(\varphi_z(t))}\cdotp
\end{equation}
The set $\zeros(P)$ is composed of stationary solutions of~\eqref{eq:newtonFlow}.
Thanks to the Cauchy-Lipschitz theorem, the Newton flow is locally well defined starting from any point~$z\notin\crits(P)$
and can be extended locally as long as $\varphi_z(t)\notin\crits(P)$. For $z\notin\crits(P)$, let us denote by $T(z)\in(0,+\infty]$ the
maximal forward time of existence of $\varphi_z$.
One has
\begin{equation}\label{eq:Lyapunov}
\forall  z\notin\crits(P), \quad\forall t\in [0,T(z))\qquad
P(\varphi_z(t)) = e^{-t} P(z) 
\end{equation}
because Leibniz's rule and~\eqref{eq:newtonFlow} imply
\[
\frac{d}{dt} \left[ e^t P(\varphi_z(t)) \right] = e^t P(\varphi_z(t)) + e^t P'(\varphi_z(t)) \varphi_z'(t)=0.
\]
In particular, from~\eqref{eq:Lyapunov} we see that Newton's flow is \textit{iso-angle}
\ie $\arg P(\varphi_z(t)) = \arg P(z)$ if $P(z)\neq 0$. Moreover, for $t \in (0,T(z))$,
$\varphi_z(t) \in \mathcal{B}(z)= \{ y\in \C \,;\, |P(y)|\leq |P(z)|\}$, which
is a compact set because $\lim\limits_{|z|\to\infty} |P(z)| = +\infty$.

\begin{thm}
Every non-constant polynomial $P  \in \C[z]$ has at least one root in $\mathbb{C}$.
\end{thm}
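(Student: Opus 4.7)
The plan is to select a starting point whose forward Newton trajectory is forced to exist for all time and cannot be blocked by a critical point, then read off a root as an accumulation point. The key observation is that~\eqref{eq:Lyapunov} forces $P\circ\varphi_{z_0}$ to travel along the straight open segment $\{e^{-t}P(z_0):t>0\}$ from $P(z_0)$ toward $0$, a one-dimensional subset of $\C$ that we can arrange to miss the finite set $\cv(P)$.

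First, I choose $z_0 \in \C\setminus\crits(P)$ with $P(z_0)\neq 0$ and such that $\arg P(z_0)$ differs from $\arg v$ for every $v\in\cv(P)\setminus\{0\}$. This excludes only finitely many arguments; since $P$ is a local diffeomorphism on $\C\setminus\crits(P)$, its image there is an open subset of $\C$ and therefore realizes a full open arc of arguments, so such a $z_0$ exists. With this choice, the open segment $\{e^{-t}P(z_0):t>0\}$ meets neither $\cv(P)$ nor $0$.

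Next, I argue that $T(z_0)=+\infty$ by contradiction. If $T(z_0)<+\infty$, the trajectory stays in the compact set $\mathcal{B}(z_0)$ while $P(\varphi_{z_0}(t))\to p:=e^{-T(z_0)}P(z_0)\neq 0$. Any accumulation point $c$ of $\varphi_{z_0}(t)$ as $t\to T(z_0)^-$ then satisfies $P(c)=p$, so the accumulation set is contained in the finite fibre $P^{-1}(p)$; being also connected, as a decreasing intersection of closed connected images of intervals, it reduces to a single point $c$. Cauchy-Lipschitz forces $c\in\crits(P)$ (otherwise the flow could be extended past $T(z_0)$), which would place $p=P(c)$ in $\cv(P)$, contradicting the choice of $z_0$.

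Finally, with $T(z_0)=+\infty$ the entire forward trajectory lies in the compact $\mathcal{B}(z_0)$, so some sequence $t_n\to+\infty$ gives $\varphi_{z_0}(t_n)\to w\in\C$; continuity of $P$ and~\eqref{eq:Lyapunov} then yield $P(w)=\lim_n e^{-t_n}P(z_0)=0$, and $w$ is the sought root. I expect the crux to lie in the second step: ruling out finite-time blow-up toward a critical point, which is precisely what the iso-angle trick achieves by reducing a two-dimensional avoidance problem to a one-dimensional one.
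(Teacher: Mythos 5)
Your proof is correct and follows essentially the same route as the paper: the iso-angle consequence of \eqref{eq:Lyapunov}, a starting point $z_0$ whose argument avoids the finitely many arguments of critical values, the conclusion $T(z_0)=+\infty$, and a root obtained as an accumulation point of the bounded trajectory. The only differences are minor: the paper produces $z_0$ from the asymptotics $z^{-d}P(z)\to\alpha$ at infinity rather than from openness of $P$ off $\crits(P)$, and in the finite-time case it simply extracts a subsequence converging to a critical point via the standard escape lemma, whereas you additionally show (validly, via the connected limit set inside a finite fibre) that the trajectory converges to a single critical point.
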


\begin{proof}
Our strategy to find a root of $P$ is to choose a starting point $z_0$ with $T(z_0)=+\infty$ and then pass to the limit in~\eqref{eq:Lyapunov}. We will see that it is enough that $\arg(P(z_0))$ avoids the finite set $\arg(\cv(P))$. Observe that $0 \notin \cv(P)$, otherwise $P$ has a root in $\crits(P)$ and we stop here.

\medskip
If $T(z)<\infty$ for some $z \in \C  \setminus \crits(P)$, then $P(z)\neq 0$. As $t\to T(z)$,  the orbit $\varphi_z(t)$ leaves any compact subset of $\C\backslash\crit(P)$. As $\varphi_z(t) \in \mathcal{B}(z)$, a compact set, there is a sequence~$t_n\to T(z)$ such that $\varphi_z(t_n)$ converges to some point $c \in \crits(P)$.
The continuity of $P$ at $c$ implies that  $\arg P(z) = \arg P(\varphi_z(t_n))=\arg(P(c)) \in  \arg(\cv(P))$.

\medskip
Let us find $z_0$ with $\arg P(z_0) \notin \arg(\cv(P))$. As $\lim\limits_{|z|\rightarrow \infty} z^{-d}P(z) = \alpha \in \C^*$, taking  $z_0=r e^{i\vartheta}$ with
$d\vartheta +  \arg\alpha\notin \arg(\cv(P)) \mod 2\pi$  
and $r>0$ large enough ensures that $\arg(P(z_0)) \notin \arg(\cv(P))$.
Therefore $T(z_0)=+\infty$ and the bounded sequence $\left(\varphi_{z_0}(n)\right)_{n\in\N} \subset \mathcal{B}(z_0)$ admits an accumulation point $z_\ast$.
The continuity of $P$ at $z_\ast$ and~\eqref{eq:Lyapunov} imply that $P(z_\ast)=\lim\limits_{n\to\infty} e^{-n}P(z_0) = 0$.
\end{proof}

\paragraph{Acknowledgement.} The second author would like to thank Dierk Schleicher for several insightful discussions about the Newton  method and the Newton flow.

{\small
}

\begin{figure}[H]
\captionsetup{width=\linewidth}
\begin{center}
\includegraphics[width=\textwidth]{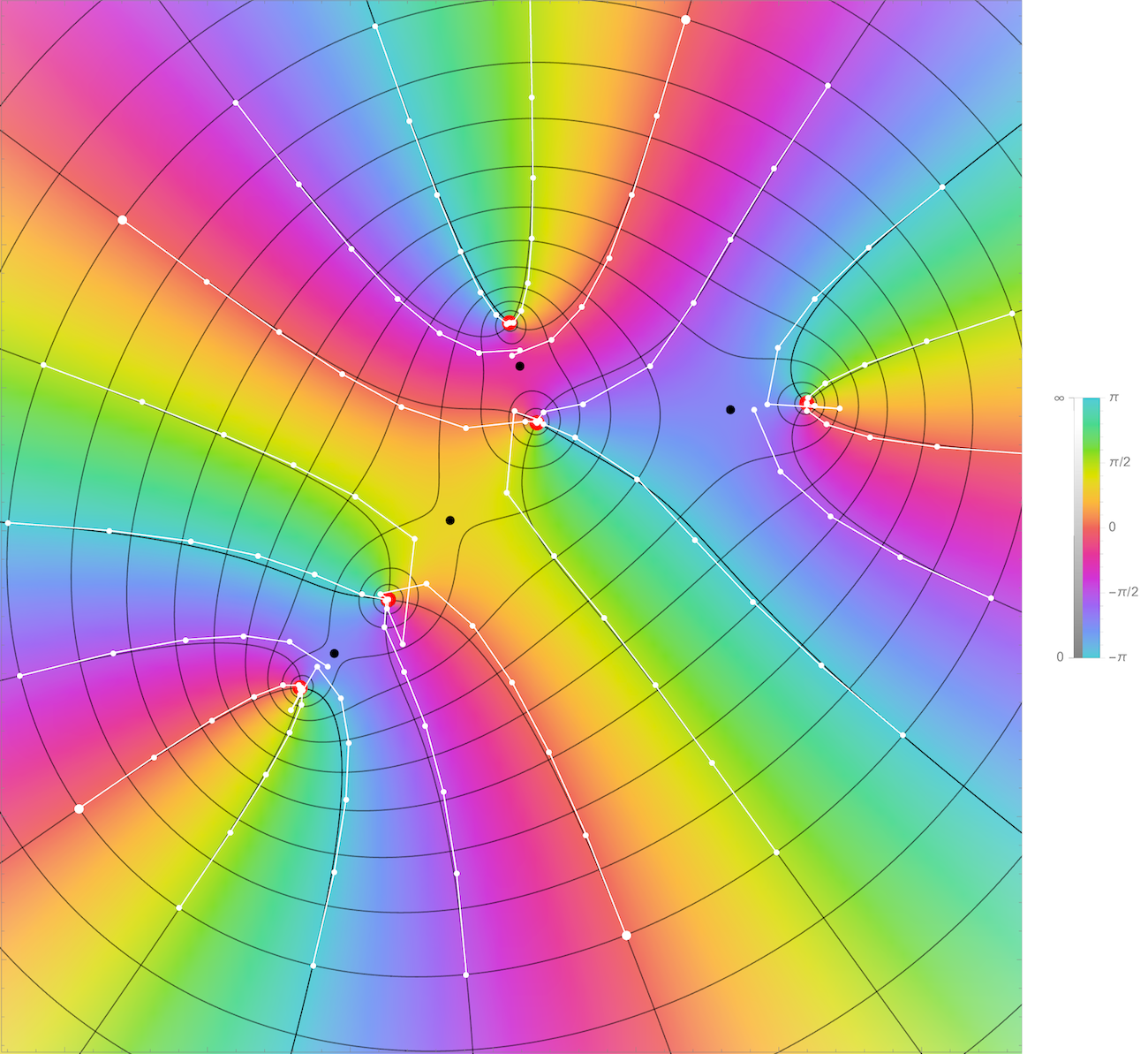}
\caption{\label{fig}
Polynomial of degree 5, with roots (red) and critical points (black). The level lines $p(z)=2^n$ with $n\in\Z$
and iso-angles (solution of Newton's flow $z'=-p(z)/p'(z)$ starting from the points $z_0$ such that
$p(z_0)\in 2^{n_0}\{\pm 1, \pm i\}$ for some $n_0\in\Z$) are indicated (gray).
The Newton's steps (white) follow approximately Newton's flow, away from the roots and the critical points.
}
\end{center}
\end{figure}

\vspace*{1em}\noindent
$^{\small 1}$ \authorramona\\[1ex]
$^{\small 2}$ \authornicu\\[1ex]
$^{\small 3}$ \authorfv

\end{document}